\newtheorem{theorem}{Theorem}[section]
\newtheorem{definition}{Definition}[section]
\newtheorem{assumption}[theorem]{Assumption}
\newtheorem{proposition}[theorem]{Proposition}
\newtheorem{remark}[theorem]{Remark}
\numberwithin{equation}{section}
\begin{document}

\title{The Stochastic Energy-Casimir Method}

\author[Arnaudon, Ganaba, Holm]{Alexis Arnaudon, Nader Ganaba and Darryl Holm}

\address{AA, NG, DH: Department of Mathematics, Imperial College, London SW7 2AZ, UK}

\maketitle

\begin{abstract}
    In this paper, we extend the energy-Casimir stability method for deterministic Lie-Poisson Hamiltonian systems to provide sufficient conditions for the stability in probability of stochastic dynamical systems with symmetries and multiplicative noise. 
    We illustrate this theory with classical examples of coadjoint motion, including the rigid body, the heavy top and the compressible Euler equation in two dimensions.  
    The main result of this extension is that stable deterministic equilibria remain stable in probability up to a certain stopping time which depends on the amplitude of the noise for finite dimensional systems and on the amplitude the spatial derivative of the noise for infinite dimensional systems. 
\end{abstract}


\section{Introduction} 

In 1966, V. I. Arnold's fundamental paper \cite{arnold1966geometrie} showed that ideal fluid mechanics can be cast into a geometric framework. In this framework of differential geometry and Lie group symmetry, the mathematical properties of ideal (nondissipative) classical fluid mechanical systems are easily identified.
For example, Arnold's geometric interpretation in \cite{arnold1966geometrie} of ideal incompressible fluid dynamics as geodesic motion on the group of diffeomorphisms was soon followed by a series of fundamental results in analysis, e.g., in \cite{ebin1970groups}. 
We shall be interested here in another development of Arnold's geometric approach to fluid dynamics, which concerns the nonlinear stability of equilibrium (time-independent) solutions. 
A nonlinear fluid stability method based on the Lyapunov method was already introduced in the early days of geometric mechanics by Arnold in \cite{arnold1965conditions,arnold1966priori} for ideal incompressible fluid flows, whose $L^2$ kinetic energy norm provides the metric for their geodesic interpretation. This approach was extended in \cite{holm1985nonlinear} to the energy-Casimir method, which allows for both kinetic and potential energy contributions and, hence, may be applied to a large class of ideal mechanical systems. This class of systems comprises Hamiltonian systems that admit reduction by Lie group symmetries. Such systems possess Lie-Poisson brackets whose null eigenvectors correspond to variational derivatives of conserved quantities called Casimirs. The Casimirs commute under the Lie-Poisson bracket with any functionals on the symmetry-reduced space, as well as with the system Hamiltonian itself. 
For Hamiltonian systems which do not have a Casimir function, the energy-momentum method, developed in  \cite{simo1991stability}, is used instead. This method uses momentum maps instead of the Casimirs to obtain stability results. 
 
Interest has been growing recently in stochastic perturbations of mechanical systems with symmetries whose dynamics can be investigated in the framework of geometric mechanics. The aim of this new science of stochastic geometric mechanics is to extend to stochastic systems the mathematical understanding gained for deterministic systems by using differential geometry and Lie groups. 
The theory of stochastic canonical Hamiltonian systems began with Bismut \cite{bismut1982mecanique} and was recently updated in geometric terms in \cite{lazaro2008stochastic}. This theory was extended to stochastic ideal fluid dynamics in \cite{holm2015variational} by using Lie group symmetry reduction of a stochastic Hamilton's principle. The general theory was developed and illustrated further for finite dimensional Euler-Poincar\'e variational principles with symmetry, leading to noncanonical stochastic Hamiltonian mechanical systems, in \cite{arnaudon2016noise,cruzeiro2016momentum}.   

The present work will seek sufficient conditions for the probabilistic stability of critical points of stochastic geometric mechanics systems, by using an extension of the energy-Casimir method. 
For this endeavour, we will need to introduce an appropriate notion of stability in probability; so that a stochastic counterpart of the energy-Casimir method can be developed and applied to stochastic dynamical systems. 
The main result of this paper is the proof that a deterministically stable stationary solution remains stable in probability up to a finite stopping time, 
for multiplicative stochastic perturbations which preserve coadjoint orbits. 
This theorem applies only if unique solutions of the stochastic process exist. However, since the stability in probability is valid only for finite time, existence and uniqueness of solutions is only needed locally in time. 

\textit{Plan of the paper.}
Section \ref{preliminaries} reviews the theory of stochastic perturbations of mechanical systems with symmetries developed by \cite{holm2015variational,arnaudon2016noise}. It also distinguishes between the notions of stability in the deterministic and stochastic settings, in the context of the deterministic energy-Casimir method.
Section \ref{energy-Casimir} forms the core of the paper, in which the stochastic energy-Casimir method is developed.  
Section \ref{examples} then illustrates the stochastic modifications of the energy-Casimir stability analysis for several classical examples, including the rigid body, the heavy top, and compressible Euler equations.

\section{Preliminaries}\label{preliminaries}

\subsection{Stochastic mechanical systems with symmetries}
This section begins by defining the type of stochastic perturbations of mechanical systems that we will study in this work. For further detail, we refer the interested reader to\cite{arnaudon2016noise,cruzeiro2016momentum} for finite dimensional systems and \cite{holm2015variational} for infinite dimensional systems. 
Although the theory has been studied quite generally in \cite{cruzeiro2016momentum}, here we will restrict ourselves to the examples in \cite{arnaudon2016noise} and \cite{holm2015variational}. 
Let $G$ be a Lie group and $\mathfrak g$ its Lie algebra. For a probability space $( \Omega, \mathcal F_t, P)$, we consider a Wiener process $W_t$ defined with respect to the standard filtration $\mathcal F_t$. 
The construction is based on the following stochastic Hamilton-Pontryagin variational principle: $\delta S = 0$ for the stochastic action integral,
\begin{align}
    S(\xi,g,\mu)= \int l(\xi)\, dt + \int \left \langle \mu ,\circ\, g^{-1} dg - \xi\, dt + \sum_i \sigma_i \circ dW_t^i\right \rangle\,.
    \label{var-principle}
\end{align}
In this formula, we denote $g\in G$, $\xi \in \mathfrak g$, $\mu \in \mathfrak g^*$, where $\mathfrak g^*$ is the dual of the Lie algebra $\mathfrak g$ under the non-degenerate pairing $\langle \cdot ,\cdot \rangle$. The vector fields $\sigma_i\in \mathfrak g$ represent constant multiples of Lie algebra basis elements and the symbol $\circ$ denotes Stratonovich stochastic integrals. The action integral  \eqref{var-principle} is invariant under left translations of the group $G$.
We refer to \cite{Yoshimura2006209,Yoshimura2007381, bloch2003nonholonomic,holm2008geometric} for more details on Hamilton-Pontryagin principle and \cite{marsden1999book,holm2008geometric}  for the use of Lie groups. 
Upon taking free variations $\delta \xi, \delta \mu$ and $\delta g$, and rearranging terms, we find the momentum map relation $ \frac{\delta l}{\delta \xi} = \mu $ and the Euler-Poincar\'e equation for its stochastic coadjoint motion,
\begin{align}
    d\mu = \mathrm{ad}^*_{(\circ g^{-1} dg)}\mu = \mathrm{ad}^*_\xi \mu\, dt + \mathrm{ad}^*_{\sigma_i} \mu \circ dW_t^i\, ,
    \label{sto-EP}
\end{align}
where the relation $\circ g^{-1} dg = \xi\, dt - \sum_i \sigma_i \circ dW_t^i$ for the left-invariant reduced velocity is imposed by variations with respect to $\mu$, regarded as a Lagrange multiplier. 
Thus, the solutions of the stochastic Euler-Poincar\'e equation \eqref{sto-EP} preserve coadjoint orbits, even in the presence of noise. 

As in many applications, systems depend on extra quantities and in such case the Lagrangian $ l(\xi)$ is replaced by $l(\xi,q)$ to account for the explicit dependence on $q\in V^*$, with $V$ a vector space upon which the Lie group $G$ acts, the stochastic Euler-Poincar\'e equations are then
\begin{align}
\begin{split}
    d \mu &+ \left ( \mathrm{ad}^*_\xi \mu  + r\diamond q\right ) dt  + \sum_i \mathrm{ad}^*_{\sigma_i} \mu   \circ dW_t^i  =0
    \,,\\
    d q & +  q\xi  \, dt  + \sum_i   q\sigma_i \circ dW_t^i=0\,,
    \label{SDE-SM-system}
\end{split}
\end{align}
where $r= \frac{\partial h}{\partial q}$, concatenation $q\xi$ denotes the left Lie algebra action of $\xi\in \mathfrak g $ on $q \in V^*$ and $\diamond: V\times V^*\to \mathfrak g^*$ is defined as $\langle r\diamond q, \xi\rangle = - \langle q\xi, r\rangle$, with appropriate pairings. 
According to \cite{gaybalmaz2016geometric}, the $\sigma_i$ could depend on the advected quantities $q$, as well, but we will restrict ourselves to the case where they are constant.
The two equations in \eqref{SDE-SM-system} can, in fact, be written as a single equation, upon noticing their relation to the coadjoint operator for the semidirect product Lie algebra $\mathfrak g\circledS V$, namely, 
\begin{align}
    d(\mu,q) + \mathrm{ad}^*_{(\xi,r)}( \mu,q) \, dt +\sum_i  \mathrm{ad}^*_{(\sigma_i,0)}( \mu,q)\circ dW_t^i =  0\, .
    \label{SD-compact}
\end{align}
Consequently, stochastic Euler-Poincar\'e systems with advected variables undergo coadjoint motion for semidirect product Lie groups. This means the energy-Casimir stability method will directly transfer to this more general case. 
We refer to \cite{holm1998euler} for more details about this theory in the deterministic case and to \cite{arnaudon2016noise,gaybalmaz2016geometric} for the stochastic derivation. 
These semidirect-product Euler-Poincar\'e equations include the heavy top and the compressible 2D Euler equation, as we will see as examples in section \ref{examples}. 

Provided the Lagrangian $l(\xi)$ in \eqref{var-principle} is hyper-regular, the stochastic Euler-Poincar\'e equation \eqref{SD-compact} can be Legendre transformed into the stochastic Lie-Poisson equation 
\begin{align}
    d f(\mu) = \{f,h\} dt + \sum_i \{ f,\Phi_i\} \circ dW_t^i\, , 
    \label{stocHamSys}
\end{align}
where  $h(\mu) = \langle \mu,\xi \rangle - l(\xi)$ is the standard  Hamiltonian, $\Phi_i (\mu) = \langle \sigma_i ,\mu\rangle$ is called the stochastic potential and $\{\cdot ,\cdot \}$ is the Lie-Poisson bracket for smooth functions $f,h:\mathfrak g^*\to \mathbb R$. 
This means that the stochastic Lie-Poisson system \eqref{stocHamSys} is Hamiltonian in the sense of \cite{bismut1982mecanique}, with stochastic Hamiltonian 
\begin{align}
H(\mu):= h(\mu)\, dt + \sum_i \Phi_i(\mu) \circ dW_t^i
\,.
    \label{stocHamExtens}
\end{align}

\begin{remark}[Energy conservation]
	
Notice that the stochastic extension in \eqref{stocHamExtens} breaks the time-translation invariance of the original deterministic Hamiltonian equation. Consequently, it breaks the energy conservation assumed in applying the deterministic energy-Casimir method.  

\end{remark}
\subsection{Deterministic stability}

Before studying the stochastic stability of the general class of stochastic systems described above, let us review the deterministic notions of stability needed here.
Consider a generic dynamical system $\dot x = f(x)$ with a fixed point $x_e$, i.e. $f(x_e)= 0$, where $f: \mathbb R^n\to \mathbb R^n$ for $x \in \mathbb R^n$.  
One of the strongest notions of stability in this situation is called nonlinear stability. 

\begin{definition}[Nonlinear stability]
    An equilibrium position $x_e$ of a dynamical system is nonlinearly stable, if for every neighbourhood $U$ of $x_e$ there exists a neighbourhood $V$ such that trajectories starting in $V$ remain in $U$. 
    In mathematical terms, given an appropriate norm $\|\cdot \|$, $\forall \epsilon>0 $, $\exists$ $\delta >0$ such that if $\|x(0)- x_e\| < \delta $, then $\| x(t) - x_e\| <\epsilon$ for $t>0$. 
\end{definition}

In practice, for finite dimensional systems, there is an equivalent formulation for nonlinear stability, which is called formal stability. 

\begin{definition}[Formal stability]
    An equilibrium point $x_0$ is formally stable, if a conserved quantity exists with vanishing first variation and definite second variation when evaluated at the equilibrium position. 
\end{definition}

In infinite dimensions, such equivalence is not valid, thus nonlinear stability requires stronger conditions on the system and its solution.  
In this case, nonlinear stability may be derived by imposing an additional convexity condition, see \cite{holm1985nonlinear} for more details.  


\subsection{Stochastic stability}

We now describe the corresponding notion of stability for stochastic systems.
Consider a generic stochastic differential equation
\begin{align} 
    dx_t = f(x_t) \, dt + \sum_i \sigma_i(x_t) \circ dW_t^i\, ,
    \label{stoch_system_generic}
\end{align}
where we take $x_t:= x(t)$ and  $\sigma_i:\mathbb R^n\to \mathbb R^n$ are given vector fields. 
Certain conditions on $f$ and $\sigma_i$ must be imposed for the existence of solutions of this stochastic process, which we shall assume are satisfied \cite{kloeden1992numerical}.
In our case, the noise amplitudes $\sigma_i(x)$ will always be multiplicative, thus fixed points can be found such that $dx_e=0$ or $f(x_e) = \sigma_i(x_e)= 0 $, $\forall i$. 
The existence of such fixed points will impose certain restrictions on the possible choice of vector fields $\sigma_i$.
We will need to extend the standard notion of stability to the stochastic case by including a stopping time. 

\begin{definition}[Transient stochastic stability]
    For a suitable norm $\|\cdot \|$, a fixed point $x_e$ is (weakly) stable in probability with stopping time $T$ (or transient stability) if for every $\epsilon,\sigma > 0 $ there exists $\delta >0$ and $T(\epsilon,\delta,C)>0$ such that if $\|x(0) - x_e \|< \delta $ and  $ T> t > 0$ 
    \begin{align}
        P \left( \| x(t) - x_e  \| > \epsilon \right) <   \sigma\, ,
    \end{align}
    where $P(A)$ is the probability of the event $A$ on the same probability space as the original stochastic process. 
\end{definition}

We will refer to this notion as stochastic stability throughout this work. 
Notice that in the case when $T\to \infty$ this notion coincides with the usual notion of stochastic stability. 
We refer to \cite{khasminskii2011stochastic} for more detailed discussions about the theory of stochastic stability.     

The interpretation of the notion of stochastic stability with stopping time needs some clarification and should not be misinterpreted. 
First, the stability is only in probability, even if there is no stopping time. 
This means that for large enough times the system will behave as if it is unstable, that is it will leave the region near the stationary position. 
This cannot be prevented as we assume, according H\"ormander theorem that these systems are ergodic, thus a single path will eventually visit the entire phase space. 
This notion of stability in probability is thus a useful notion only for studying these stochastic systems for short times.
In the long time regime, one needs to study the invariant measure of the associated Fokker-Planck equation or other objects such as random attractors to understand the long time behaviour of these systems, see \cite{arnaudon2016noise}. 
The present work is thus only devoted to the short time analysis of the same systems. 

\subsection{The energy-Casimir method}

We now describe the deterministic energy-Casimir method following the exposition of \cite{holm1985nonlinear} and introduce important notations for the next section \ref{energy-Casimir}.
We assume that our mechanical system can be written in Lie-Poisson form with Hamiltonian $h:\mathfrak g^*\to \mathbb R $ and Casimir function $C:\mathfrak g^*\to \mathbb R$. 
Recall that the Casimir function is constant on the coadjoint orbits and corresponds to a constant of motion of the Lie-Poisson Hamiltonian system. 
We assume that this system has an equilibrium position $\mu_e:\mathfrak g^* $, that is $\dot \mu_e= 0$.

To simplify the formulas, we introduce abbreviated notations for the variations of an arbitrary function $F:\mathfrak g^* \to \mathbb R$. 
We will write the first variation of $F$ as a map $DF:\mathfrak g^*\to \mathfrak g$ which is understood as the variational derivative of $F$ with respect to its argument $\mu$. 
The second variation of $F$ is the variational derivative of $DF(\mu)$ with respect to its argument and is a map $D^2F: \mathfrak g^*\to \mathfrak g\times \mathfrak g$. 
When evaluated at a point $\mu$ this map is $D^2F(\mu):\mathfrak g^*\times \mathfrak g^*\to \mathbb R$ provided $\mathfrak g^{**}\simeq \mathfrak g$. 
Equivalently, we can write $D^2F(\mu):\mathfrak g^*\to \mathfrak g$ for any $\delta \mu_1,\delta\mu_2\in \mathfrak g^*$, as
\begin{align*}
    D^2F(\mu)(\delta \mu_1, \delta \mu_2) = \langle \delta \mu_1, D^2F(\mu) \delta\mu_2\rangle_{\mathfrak g^*\times \mathfrak g}\, . 
\end{align*}
We now simply describe the energy-Casimir method; some proofs will be given in the stochastic context below. 
We begin by defining the extended Hamiltonian $H_c:= h+C$.  
We then need to find conditions on the form of the Casimir $C$ such that $\langle DH_C (\mu_e) ,\delta \mu\rangle= 0 $ for all $\delta \mu\in \mathfrak g^*$.  
More explicitly, we have to find $\lambda$ such that 
\begin{align*}
    \langle D H_C(\mu_e),\delta \mu\rangle  = \langle D h(\mu_e),\delta \mu\rangle  + \lambda \langle D C(\mu_e),\delta \mu\rangle = 0\, . 
\end{align*}
The last step is to compute the quadratic form $\langle \delta \mu, D^2H_C(\mu_e) \delta \mu\rangle $ and find additional conditions on the Casimir as well as specific relations between the equilibrium position $\mu_e$ and certain parameters of the system for this quantity to be sign-definite. 
These relations will give the conditions for nonlinear stability. 
Indeed, if these can be found, then the quadratic form
\begin{align}
    \| \delta \mu \|_{H_C}^2:= \pm \langle \delta \mu, D^2H_C(\mu_e) \delta \mu \rangle\, 
    \label{HC-norm}
\end{align}
comprises a norm on the space of variations $\delta \mu \in \mathfrak g^*$.
The choice of sign is to indicate the quadratic form is either negative definite, or positive definite. 
One may directly check that this norm is preserved by the linearization of the deterministic flow. 
In finite dimensions, this property of the norm corresponds to the definition of nonlinear stability stated before. 
For infinite dimensions, one need stronger conditions obtainable from convexity estimates leading to both lower and upper bounds, in order to claim nonlinear stability. 

\section{The stochastic Energy-Casimir method}\label{energy-Casimir}

We are now ready to extend the energy-Casimir method to the stochastic systems described in the previous section. 
We begin by describing the finite dimensional energy-Casimir method in the Euler-Poincar\'e setting, without advected quantities. 
We will treat the latter at the end of this section. 
First, suppose there exists an equilibrium solution $\mu_e$ of the deterministic dynamical system corresponding to \eqref{sto-EP}.
This condition depends on the geometry and the Hamiltonian of the system, which is written explicitly as 
\begin{align*}
    \mathrm{ad}^*_{Dh(\mu_e)} \mu_e = 0\,.  
\end{align*}
For such a fixed point $\mu_e$ to persist in the stochastically perturbed equation we need a particular choice for the noise vector fields $\sigma_k$, that is 
\begin{align*}
    \mathrm{ad}^*_{\sigma_k}\mu_e = 0\, , \quad \forall k\, . 
\end{align*}
As we will see in the examples, there exist particular choices of the noise fields such that this condition is satisfied at least for the rigid body, heavy top and compressible fluids. 
In particular, this condition holds for the rigid body when the noise is aligned with the direction of the fixed point on the angular momentum sphere. 

The next step is the same as in the deterministic case. Namely, one seeks conditions on the Casimir such that the first variation of the extended Hamiltonian $H_C:= h+ C$ vanishes at the equilibrium solution $\mu_e$, i.e. $DH_C(\mu_e)=0$.
The third step is to check the sign definiteness of the second variation of $H_C$ evaluated at $\mu_e$. 
This does not depend on the noise but on the form of $H_C$ and the geometry of the system. 
We can thus use the results from the deterministic case and assume from now on that the point $\mu_e$ is deterministically stable, that is $D^2H_C(\mu_e)$ is sign-definite and corresponds to the norm \eqref{HC-norm}.
From now on, we will use the convention $D^2H_C:= D^2H_C(\mu_e)$ to simplify the notation. 
The difference with the deterministic case enters in the previous step of verifying that the linearized flow conserves the linearized Hamiltonian. 
This conservation will not hold when the stochastic perturbations are present, because of their time dependence.  
We first compute the linearised flow using $\mu= \mu_e + \epsilon \delta \mu$ for $\epsilon \ll 1$ to obtain the stochastic process for $\delta \mu$
\begin{align}
    d\delta \mu_t  = \mathrm{ad}^*_{D h(\mu_e)} \delta \mu_t\, dt 
    + \mathrm{ad}^*_{D^2h(\mu_e)\delta \mu_t} \mu_e\, dt + \sum_i \mathrm{ad}^*_{\sigma_i} \delta \mu_t \circ dW_t^i\, ,
\end{align}
which, when we use the extended Hamiltonian $H_C$ instead of $h$ becomes
\begin{align}
    d\delta \mu_t  =  \mathrm{ad}^*_{D^2H_C \delta \mu_t} \mu_e\, dt + \sum_i \mathrm{ad}^*_{\sigma_i} \delta \mu_t \circ dW_t^i\, .
\end{align}
It is now clear that in general this flow will not preserve the $H_C$ norm \eqref{HC-norm}.
Therefore, we need to estimate by how much this quantity is not preserved by the flow. 
For this, we compute the time evolution of $H_C$ norm of $\delta \mu$ to find  
\begin{align*}
    d\|\delta \mu_t\|^2_{H_C} &=  2 \langle  D^2H_C \delta\mu_t,d\delta \mu_t\rangle = \sum_i 2 \langle  D^2H_C \delta\mu_t,  \mathrm{ad}^*_{\sigma_i} \delta \mu_t \rangle\circ dW_t^i\, ,
\end{align*}
where we used the symmetry of $D^2H_C$ under the pairing $\langle \cdot, \cdot \rangle$.  
Recall now that the notion of stochastic stability is only in probability. 
  For dealing with this we need to introduce a specific type of stochastic derivative for a stochastic process $y_t$
\begin{align} 
    \mathcal D_y f(y_t):= \lim _{\Delta t\to 0 } \mathbb E \left (\left . \frac{f(y_{t+\Delta t})- f(y_t)}{\Delta t}\right | \mathcal F_t\right )\, ,
    \label{D-def}
\end{align}
which is also called the infinitesimal generator of the process $y_t$ for any function $f: \mathfrak g^* \to \mathbb{R}$, see for example \cite{oksendal2003stochastic}. 
We use the notation $\mathcal D_y$ as this operation does not contain a partial time derivative, as would have occurred if the definition \eqref{D-def} were applied on a time-dependent function $f(y)$. 
Also, $\mathcal D_y f(y_t)$ is also known as the backwards Kolmogorov operator and the Fokker-Planck equation for the process $y$ is nothing else than 
\begin{align}
    \partial_t \mathbb P(y,t) = \mathcal D_y^*\mathbb P( y,t)\, ,
\end{align}
where $\mathcal D_y^*$ is the formal adjoint of $\mathcal D_y$, in the $L^2$ pairing. 
We refer to \cite{oksendal2003stochastic} for more details about this operator in the context of mechanics. 
Using this derivative now allows us to compute 
\begin{align}  
    \mathcal D_{\delta \mu} \|\delta \mu_t\|_{H_C}^2 
    &=  \sum_i \langle (D^2H_C\mathrm{ad}^*_{\sigma_i} \delta \mu_t  +  \mathrm{ad}_{\sigma_i} D^2H_C \delta\mu_t )\,,\, \mathrm{ad}^*_{\sigma_i} \delta \mu_t\rangle\, .
    \label{dtildeH}
\end{align}

\begin{remark}[Bi-invariant pairings]
Notice that for bi-invariant pairings this formula \eqref{dtildeH} simplifies to 
\begin{align}
    \mathcal D_{\delta \mu} \|\delta \mu_t\|_{H_C}^2 &=  \sum_i \langle   D^2H_C\mathrm{ad}_{\sigma_i} \delta \mu_t -\mathrm{ad}_{\sigma_i} D^2H_C \delta\mu_t , \mathrm{ad}_{\sigma_i} \delta \mu_t\rangle\, ,
    \label{dtildeH-SS}
\end{align}
which shows that this term consists of the commutation between $\mathrm{ad}_\sigma$ and the Hessian of the extended Hamiltonian $H_C$. 
\end{remark}

We now do the following assumption. 
\begin{assumption}\label{assumption}
We assume that we can bound \eqref{dtildeH} as  
\begin{align}
    \mathcal D_{\delta\mu} \|\delta \mu_t\|^2_{H_C} \leq \Sigma^2 \|\delta \mu_t\|_{H_C}^2\, , 
    \label{DtH}
\end{align}
for a given constant $\Sigma\in \mathbb R$.
\end{assumption}

Finding a good estimate for this constant can only be done case by case, and will provide important information on the stability of the fixed point. 
Notice that if the noise vanishes, i.e. $\Sigma=0$, then the norm $\|\delta \mu\|^2_{H_C}$ is conserved. 
The estimate \eqref{DtH} provides a bound of Gr\"onwall type on the time evolution of the expected value of the linearized Hamiltonian, as described in the next proposition.

\begin{proposition}
    Assuming that \eqref{DtH} holds for some $\Sigma$, we have the following time dependent bound for the expected value of the $H_C$ norm of $\delta \mu_t$ 
\begin{align}
    \mathbb{E}(\|\delta \mu_t\|_{H_C}^2)\leq \|\delta \mu_0\|_{H_C}^2 e^{\Sigma^2 t}\, . 
    \label{Htilde}
\end{align}
\end{proposition}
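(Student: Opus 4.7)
My plan is to reduce the claim to a scalar Grönwall inequality for the real-valued function $\phi(t):=\mathbb{E}(\|\delta\mu_t\|_{H_C}^2)$. The only analytic input needed is Dynkin's formula, together with Assumption~\ref{assumption}; everything else is bookkeeping.

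First I would apply Dynkin's formula to the non-negative functional $F(\delta\mu):=\|\delta\mu\|_{H_C}^2=\pm\langle\delta\mu,D^2H_C\,\delta\mu\rangle$, which, since $D^2H_C$ is a fixed bounded symmetric bilinear form, is a smooth quadratic function on $\mathfrak g^*$ and therefore lies in the domain of the generator $\mathcal D_{\delta\mu}$. Recalling that $\mathcal D_{\delta\mu}$ is precisely the infinitesimal generator of the linearised stochastic process, Dynkin's formula yields
\begin{align*}
    \mathbb{E}\bigl(\|\delta\mu_t\|_{H_C}^2\bigr) \;=\; \|\delta\mu_0\|_{H_C}^2 \;+\; \mathbb{E}\int_0^t \mathcal D_{\delta\mu}\,\|\delta\mu_s\|_{H_C}^2\, ds\,.
\end{align*}
This step is the martingale version of Itô's lemma: one writes $F(\delta\mu_t)=F(\delta\mu_0)+\int_0^t \mathcal D_{\delta\mu} F(\delta\mu_s)\,ds+M_t$ with $M_t$ a local martingale coming from the Stratonovich-to-Itô correction, and takes expectations.

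Second, I would invoke Assumption~\ref{assumption} to replace the integrand by $\Sigma^2\|\delta\mu_s\|_{H_C}^2$, giving
\begin{align*}
    \phi(t) \;\le\; \phi(0) \;+\; \Sigma^2\int_0^t \phi(s)\, ds\,.
\end{align*}
The classical (integral form of) Grönwall's inequality then immediately produces $\phi(t)\le \phi(0)\,e^{\Sigma^2 t}$, which is exactly \eqref{Htilde}. Equivalently, one may differentiate the Dynkin identity in $t$ to obtain $\phi'(t)\le \Sigma^2\phi(t)$ and integrate.

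The only real obstacle is the justification of Dynkin's formula, i.e.\ that the local-martingale part really has zero expectation. For an arbitrary initial condition this can fail if the moments of $\|\delta\mu_t\|_{H_C}^2$ blow up, but here the issue is mild: $F$ is quadratic, $D^2H_C$ is constant, and the coefficients $\mathrm{ad}^*_{\sigma_i}$ acting on $\delta\mu$ are linear with bounded operator norm, so standard $L^2$-estimates for linear SDEs guarantee that $\mathbb{E}\|\delta\mu_t\|^2$ is locally integrable in $t$. This legitimises both Dynkin's formula and the use of Fubini to differentiate $\phi(t)$, after which Grönwall closes the argument. If one wishes to be fully safe, the whole computation can first be performed up to a localising stopping time $\tau_n:=\inf\{t:\|\delta\mu_t\|_{H_C}>n\}$ and then $n\to\infty$ is taken using Fatou's lemma, which is consistent with the paper's framework of stability only up to a stopping time.
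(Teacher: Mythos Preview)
Your proof is correct and matches the paper's approach almost exactly: the paper presents two variants, one via the Fokker-Planck equation (differentiate $\mathbb{E}(\|\delta\mu_t\|_{H_C}^2)$, use $\partial_t\mathbb P=\mathcal D_\xi^*\mathbb P$ and duality, then integrate) and one via Dynkin's formula followed by a Picard/Gr\"onwall argument, and your Dynkin-plus-Gr\"onwall route is the second of these, with your differential alternative being the first. Your added remarks on localisation and the linearity of the SDE coefficients supply justification that the paper leaves implicit.
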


\begin{proof}
    We compute
    \begin{align*}
        \frac{d}{dt} \mathbb E(\|\delta \mu_t\|^2_{H_C}) &= \frac{d}{dt}\int \|\delta \xi\|^2_{H_C} \mathbb P(\xi, t)\, d\xi\\
        &= \int\|\xi \|^2_{H_C} \frac{d}{dt}\mathbb P(\xi, t)\, d\xi\\
        &= \int\|\xi \|^2_{H_C} D_\xi^* \mathbb P(\xi, t)\, d\xi\\
        &= \int \mathcal D_\xi \|\xi \|^2_{H_C} \mathbb P(\xi, t)\, d\xi\\
        &\leq \Sigma^2  \int \|\xi\|^2_{H_C} \mathbb P(\xi, t)\, d\xi\\
        &= \Sigma^2\,  \mathbb E( \|\delta \mu_t\|^2_{H_C} )\, , 
    \end{align*}
     where $\mathbb P$ is the probability density of the process $\delta \mu$ and $\mathcal D_\xi$ its generator \eqref{dtildeH}, which is symmetric in this case as the drift term vanishes.
    Finally, a direct time integration gives \eqref{Htilde}.
    Alternatively, we can use Dynkin's formula and with that we have
        \begin{align*}
      \mathbb E(\|\delta \mu_t\|^2_{H_C}) &= \|\delta \mu_0\|^2_{H_C} +  \int \mathcal D_\xi \|\xi \|^2_{H_C} \mathbb P(\xi, t)\, d\xi \, dt \\
        &\leq  \|\delta \mu_0\|^2_{H_C} + \Sigma^2  \int \|\xi\|^2_{H_C} \mathbb P(\xi, t)\, d\xi \, dt\\
        &= \|\delta \mu_0\|^2_{H_C} + \Sigma^2  \int \mathbb E(\|\delta \mu_t\|^2_{H_C})  \, dt \, , 
    \end{align*}
     then using Picard sequence of successive approximations, we obtain \eqref{Htilde}. The main advantage of using Dynkin's formula is that we do not have to make any assumption on the sign of the right hand side of the inequality, which makes it suitable when considering systems with geometric dissipation. 
\end{proof}

We can now prove the main result of this paper. 
\begin{theorem}\label{SEC}
    Let $\mu_e$ be an equilibrium state of the stochastic Euler-Poincar\'e equation \eqref{sto-EP}. 
    Then $\mu_e$ is transiently stable in probability if it is nonlinearly stable for the deterministic dynamics. 
\end{theorem}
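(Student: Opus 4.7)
The plan is to use the norm $\|\cdot\|_{H_C}$ supplied by deterministic stability as a Lyapunov-type functional for the linearized stochastic dynamics, and then pass from the expectation bound in the preceding proposition to a probability bound via Markov's inequality. Since $\mu_e$ is deterministically (formally) stable, the quadratic form $\langle \delta\mu, D^2 H_C \delta\mu\rangle$ is sign-definite and thus defines a genuine norm $\|\delta\mu\|_{H_C}^2$ on $\mathfrak g^*$, equivalent (in finite dimensions) to the ambient norm used in the definition of transient stability. Everything then reduces to comparing the stochastic evolution of this norm to its initial value.

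\textbf{Step 1.} Fix $\epsilon,\sigma>0$ and consider an initial condition $\delta\mu_0$ with $\|\delta\mu_0\|_{H_C}<\delta$ for some $\delta>0$ to be chosen. Under Assumption \ref{assumption}, the proposition yields
\begin{equation*}
\mathbb E\bigl(\|\delta\mu_t\|_{H_C}^2\bigr)\le \|\delta\mu_0\|_{H_C}^2\, e^{\Sigma^2 t}\le \delta^2\, e^{\Sigma^2 t}.
\end{equation*}

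\textbf{Step 2.} Apply Markov's (Chebyshev's) inequality to the non-negative random variable $\|\delta\mu_t\|_{H_C}^2$:
\begin{equation*}
P\bigl(\|\delta\mu_t\|_{H_C}>\epsilon\bigr)
= P\bigl(\|\delta\mu_t\|_{H_C}^2>\epsilon^2\bigr)
\le \frac{\mathbb E\bigl(\|\delta\mu_t\|_{H_C}^2\bigr)}{\epsilon^2}
\le \frac{\delta^2\, e^{\Sigma^2 t}}{\epsilon^2}.
\end{equation*}

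\textbf{Step 3.} Choose $\delta$ and the stopping time $T$ so that the right-hand side is smaller than $\sigma$ for all $0<t<T$. For example, fix any $\delta<\epsilon\sqrt{\sigma}$ (which is already enough in the deterministic limit $\Sigma=0$), and define
\begin{equation*}
T=T(\epsilon,\delta,\Sigma):=\frac{1}{\Sigma^2}\log\!\left(\frac{\sigma\epsilon^2}{\delta^2}\right)>0.
\end{equation*}
Then for all $t\in(0,T)$ we obtain $P(\|\delta\mu_t\|_{H_C}>\epsilon)<\sigma$, which is precisely the definition of transient stochastic stability in the norm $\|\cdot\|_{H_C}$. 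In finite dimensions, equivalence of norms transfers this statement to any ambient norm on $\mathfrak g^*$.

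The main obstacle is not this probabilistic comparison, which is essentially textbook, but rather the validity of the Gronwall-type bound underlying Step 1, i.e.\ the existence of a finite constant $\Sigma$ in Assumption \ref{assumption}. Its existence is automatic in finite dimensions once $D^2 H_C$ is sign-definite and the $\sigma_i$ are fixed Lie algebra elements, since \eqref{dtildeH} is a quadratic form in $\delta\mu_t$. In infinite dimensions this is more delicate: one must estimate the commutator of $\mathrm{ad}_{\sigma_i}$ with $D^2 H_C$ (cf.\ the bi-invariant formula \eqref{dtildeH-SS}), which typically involves spatial derivatives of the noise fields and therefore limits the admissible noise. This is also why the authors emphasize in the abstract that, for infinite-dimensional systems, the relevant amplitude controlling the stopping time $T$ is that of the \emph{spatial derivative} of the noise rather than the noise itself.
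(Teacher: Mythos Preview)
Your proof is correct and follows essentially the same route as the paper: apply the Gr\"onwall bound from the preceding proposition and then Markov's inequality to extract a probability estimate, then read off $\delta$ and the stopping time $T$. Your version is slightly tidier in that you apply Markov to the event $\{\|\delta\mu_t\|_{H_C}>\epsilon\}$ (matching the definition of transient stability) rather than to $\{\|\delta\mu_t\|_{H_C}^2>\epsilon\}$, and you make explicit the norm-equivalence step in finite dimensions and the role of Assumption~\ref{assumption} in infinite dimensions, but there is no substantive difference in strategy.
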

\begin{proof}
    Given the constant $\Sigma$ estimated before, we apply the standard Markov inequality to the norm $\|\delta \mu\|_{H_C}^2$ and obtain
    \begin{align*}
        P(\|\delta \mu_t\|_{H_C}^2>\epsilon ) &\leq \frac{1}{\epsilon} \mathbb E( \|\delta \mu_t\|_{H_C}^2) \\
         &\leq \frac{1}{\epsilon}  \|\delta \mu(0)\|_{H_C}^2 e^{\Sigma^2 t}\, . 
    \end{align*}
    Thus, for all $\epsilon>0 $ we can find $\delta= \|\delta \mu(0)\|_{H_C}^2$ and $ \sigma \geq \frac{1}{\epsilon}  \|\delta \mu(0)\|_{H_C}^2 e^{\Sigma^2 t}$  and the stopping time $T_\mathrm{max} = \frac{1}{\Sigma^2}\mathrm{ln}\left (\frac{\epsilon}{\delta} \right )$. 
    For infinite dimensions, the nonlinear stability condition is obtained via the introduction of a stronger norm which satisfies certain convexity estimates, see \cite{holm1985nonlinear}. 
    This norm is controlled by the extended Hamiltonian for all time.
    The same argument thus applies and the stochastic nonlinear stability condition holds in infinite dimensions. 
\end{proof}

The proof of this theorem demonstrates the importance of the constant $\Sigma$ which gives the rate of expansion of the subspace of the phase space which contains the expected trajectory of the system. 

\begin{remark}[Semidirect products]
The analysis for the semidirect-product case in \eqref{SD-compact} follows analogously, and it will not be written out here. The only difference for the semidirect-product case lies in the estimation of the constant $\Sigma$. 
We will estimate $\Sigma$ case by case in the example section \ref{examples}.
\end{remark}

\section{Examples}\label{examples}

This section applies the stochastic Hamiltonian stability theory to a few illustrative examples. 
We treat the rigid body in section \ref{RB}, the heavy top in section \ref{HT} and the compressible Euler equation in section \ref{Euler-compressible}. 
The incompressible fluid case will be obtained as a consequence of the result of the compressible case. 

\subsection{The stochastic free rigid body}\label{RB}

We will not review the derivation of the rigid body from physical principles, such derivation can be found in \cite{marsden1999book} and \cite{arnaudon2016noise} for its stochastic deformation. 
In short, the Lie algebra is $\mathfrak g= \mathfrak{so}(3)$, which we identify with $\mathbb R^3$. 
In this case, the cross product is the Lie bracket and the negative coadjoint action. 
The reduced momentum is denoted $\boldsymbol \Pi$ and the equation for the stochastic rigid body is
\begin{align}
    d\boldsymbol\Pi + \boldsymbol\Pi\times \boldsymbol\Omega\, dt + \sum_i\boldsymbol\Pi\times \boldsymbol{\sigma}_i \circ dW^i_t=0\, ,
    \label{Sto-RB-stra}
\end{align}
where the Hamiltonian is $h(\Pi) = \frac12 \boldsymbol \Pi \cdot \mathbb I^{-1} \boldsymbol \Pi:=\frac12 \boldsymbol \Pi \cdot \boldsymbol \Omega$, with moment of inertia $\mathbb I= \mathrm{diag}(\mathbb I_1,\mathbb I_2,\mathbb I_3)$. 

The theory for stochastic stability now follows by assuming that the position $\Pi_1$ is stable. 
This is the case if we choose $\mathbb I_1> \mathbb I_2> \mathbb I_3$. 
The only noise compatible with this equilibrium position consists of a single Wiener process with amplitude $\boldsymbol \sigma= \sigma e_1$. 
The Casimir for this system is $C(\boldsymbol \Pi) = \Phi(\frac12 \|\boldsymbol \Pi \|^2)$ for an arbitrary function $\Phi:\mathbb R\to \mathbb R$ and the extended Hamiltonian is 
\begin{align}
    H_C(\Pi) = \frac12 \boldsymbol \Pi\cdot \boldsymbol \Omega + \Phi\left (\frac12 \|\boldsymbol \Pi\|^2\right )\, .
\end{align}
The deterministic condition for stability is that $\mathbb I_1$ is either a major or minor axis. 
In the other two cases there is a function $\Phi$ which gives the positive definiteness of $\|\delta \boldsymbol \Pi \|^2_{H_C}$. That is, the quantity
\begin{align*}
    D^2H_C(\Pi_e) (\delta \boldsymbol \Pi)^2= \left (\frac{1}{\mathbb I_2}- \frac{1}{\mathbb I_1}\right )\delta \Pi_2^2 + \left (\frac{1}{\mathbb I_3}- \frac{1}{\mathbb I_1}\right )\delta \Pi_3^2 + \Phi''\left (\frac12\right ) \delta \Pi_1^2
\end{align*}
is positive definite. 
By directly computing \eqref{dtildeH-SS} we obtain the following for the estimation of the constant $\Sigma$  
\begin{align*}
    \mathcal D_{\delta \Pi}\|\delta \boldsymbol \Pi \|^2_{H_C} &=  \sigma^2\left (\delta \Pi_3^2 (\mathbb I_3^{-1}-\mathbb I_2^{-1}) + \delta \Pi_2^2  (\mathbb I_2^{-1}-\mathbb I_3^{-1})\right ) \\
    &\leq \sigma^2\left (  \delta \Pi_3^2 ( \mathbb I_3^{-1}-\mathbb I_1^{-1}) + \delta \Pi_2^2 (\mathbb I_2^{-1}- \mathbb I_1^{-1})\right ) \\
    &\leq \sigma^2\|\delta \boldsymbol \Pi \|^2_{H_C}\, ,
\end{align*}
where the last inequality is valid in the positive definite case, and the negative definite case just needs an overall minus sign. 
Thus $\Sigma^2 = \sigma^2>0$ is directly proportional to the noise amplitude. 
We can then apply theorem \ref{SEC} to obtain transient stochastic stability of this equilibrium. 

\begin{remark}[Kubo oscillator]
	
If $\mathbb I_2= \mathbb I_3$, the stability result is stronger; since $\mathbb I_2= \mathbb I_3$ implies $\Sigma=0$. 
In this case, the system is equivalent to the Kubo oscillator whose dynamics takes place on circles around the $e_1$ direction, see \cite{arnaudon2016noise}. 

\end{remark}
\begin{figure}[htpb]
    \centering
    \subfigure[Stochastic rigid body]{\includegraphics[scale=0.45]{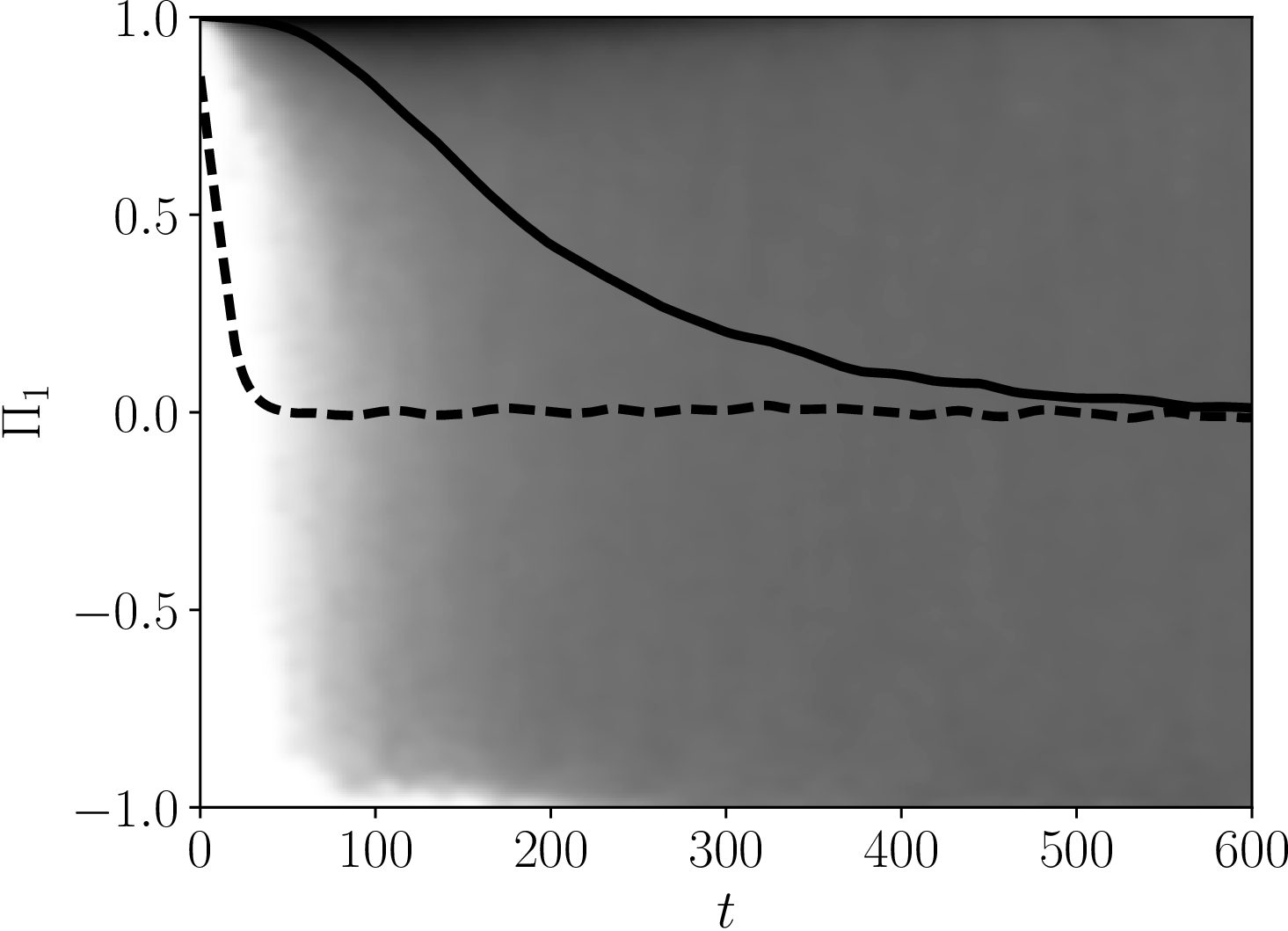} \label{fig:RB-stab}}
    \subfigure[Stochastic heavy top]{\includegraphics[scale=0.45]{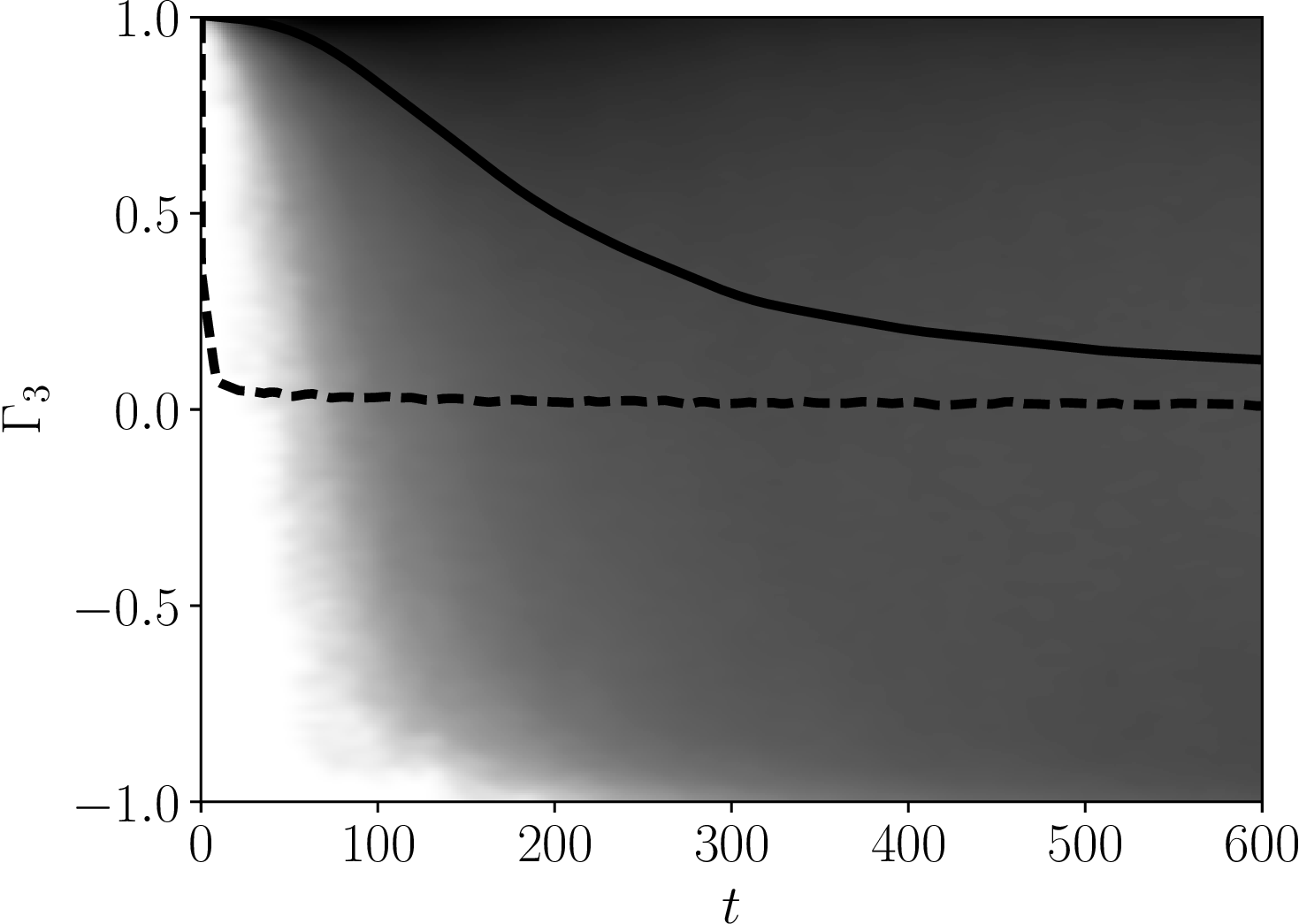} \label{fig:HT-stab}}
    	\caption{ This figure illustrates the stochastic stability of the rigid body (left) and heavy top (right).   The black solid line is the mean path and the black dashed line is the mean of the the same simulation but with an isotropic noise, $\sigma_i=\sigma e_i$.  The gray shades in the background represent the probability density of the solution  $\Pi_1$  and $\Gamma_3$ for the rigid body and the heavy top, respectively.   The transition between deterministic stable regimes occurs near $\Pi_1=1$ and $\Gamma_3 = 1$, respectively, and a uniform distribution illustrates  the stopping time for this transition.  }
\end{figure}

We illustrate the effect of losing stability for large times via a simple numerical experiment, in which we sampled many stochastic rigid bodies with the same initial condition close to the equilibrium $\boldsymbol \Pi=(1,0,0)$. 
We display the result in figure \ref{fig:RB-stab} with the mean trajectory of the projection of the momentum on the $e_1$ axis of the body. 
On figure \ref{fig:RB-stab}, one observes a transition between a regime of stability in probability and an ergodic regime.
The timescale for this transition is given by the stopping time of the stability theory. 
Notice that the stopping time is only an upper bound, thus we are guaranteed to lose stability for larger times. 

\subsection{The stochastic heavy top}\label{HT}

We will follow the example of \cite{holm1985nonlinear} for the semidirect product and consider the integrable heavy top called the Lagrange top, which is the case when $\mathbb I_1= \mathbb I_2$. 
The advected variable is $\boldsymbol \Gamma \in \mathbb R^3$ and the Hamiltonian is $h(\boldsymbol \Pi, \boldsymbol \Gamma) = \frac12 \boldsymbol \Pi\cdot \boldsymbol \Omega + mgl \boldsymbol \chi \cdot \boldsymbol \Gamma$, where $\boldsymbol \chi= e_3$. 
We refer to \cite{lewis1992heavy} for a complete study of the stability of the heavy top in the general case. 
The stochastic heavy top equations follow directly from \eqref{SDE-SM-system} and  read
\begin{align}
    \begin{split}
    d\boldsymbol \Pi &+(\boldsymbol \Omega\, dt + \sum_i \boldsymbol{\sigma}_i\circ dW^i_t) \times\boldsymbol \Pi +mgl (\boldsymbol \Gamma \times \boldsymbol \chi)dt =0 \,,\\
d\boldsymbol \Gamma &+(\boldsymbol \Omega\, dt + \sum_i\boldsymbol{\sigma}_i\circ dW^i_t)\times\boldsymbol \Gamma= 0 \,.
    \end{split}
\end{align}
The deterministic equilibrium position is $\Pi_e= (0,0,\Pi_3)$ and $\Gamma_e= (0,0,1)$, and the stability criterion found from the deterministic energy-Casimir method is $\Pi_3^2> 4mg l \mathbb I_1$. 
The compatible noise is $\boldsymbol \sigma = \sigma e_3$. 
Notice that this stochastic system is integrable, see \cite{arnaudon2016noise}.
As before, we estimate the constant $\Sigma$ by a direct computation of \eqref{dtildeH}. 
For this, we need the adjoint and coadjoint actions
\begin{align*}
    \mathrm{ad}_{(\Pi,\Gamma)} (\Pi',\Gamma')&= \left (\Pi\times \Pi', \Pi\times \Gamma' + \Gamma\times \Pi'\right )\\
    \mathrm{ad}^*_{(\Pi,\Gamma)} (\Pi',\Gamma') &= -\left ( \Pi\times \Pi' + \Gamma\times \Gamma', \Pi\times \Gamma'\right )\, . 
\end{align*}
By substituting the form of $\boldsymbol \sigma$, these equations simplify to
\begin{align*}
    \mathrm{ad}^*_{(\sigma e_3,0)} (\delta \Pi, \delta \Gamma)&= -\sigma ( e_3\times \delta \Pi, e_3\times \delta \Gamma)\\ 
    \mathrm{ad}_{(\sigma e_3,0)} (\xi, \eta)&= \sigma ( e_3\times \xi, e_3\times \eta)\, .
\end{align*}
Remarkably, the coadjoint and adjoint action only differ by a sign, as also occurs for semi-simple Lie algebras.
This implies that we only need to compute a simplified version of the formula \eqref{dtildeH}, that is 
\begin{align*}
    \mathcal D_{(\delta \Pi_t,\delta \Gamma_t)}\|(\delta \Pi_t,\delta \Gamma_t) \|_{H_C}^2 &= \sigma^2\langle -D^2H_C\mathrm{ad}_{(e_3, 0 ) } (\delta \Pi_t,\delta \Gamma_t)   \\
    &\hspace{20mm}+  \mathrm{ad}_{(e_3,0) } D^2H_C (\delta \Pi_t,\delta \Gamma_t),  ( e_3\times \Pi, e_3\times \Gamma)\rangle \\
    &= \sigma^2(\delta \Pi_3^2 -\delta \Pi_2^2)\left ( \frac{1}{\mathbb I_3}-\frac{1}{\mathbb I_2} \right ) +\sigma^2 (\delta \Gamma_3^2- \delta \Gamma_2^2)  \\
    &\leq  \sigma^2\delta \Pi_3^2  \frac{1}{\mathbb I_3}  + \sigma^2\delta \Pi_2^2\frac{1}{\mathbb I_2}  +\sigma^2 (\delta \Gamma_3^2+ \delta \Gamma_2^2)\\
    &\leq \sigma^2 \|(\delta \Pi_t,\delta \Gamma_t)\|^2_{H_C}\, . 
\end{align*}
For this computation we combined the second variation of the extended Hamiltonian with the condition on the Casimir which provides the nonlinear stability results. 
The complete equation is rather lengthy, so we just refer to \cite{holm1985nonlinear} for more details.
The important point is that the last step in this computation uses the positivity of the quadratic form to bound the expected time evolution of the norm of the variations by the norm itself. 
Then, as in the case of the rigid body, we have that $\Sigma^2 = \sigma^2$.
We can finally apply the theorem \ref{SEC} to obtain transient stochastic stability of this equilibrium. 

Similar to the rigid body example, we rely on numerical tests to demonstrate the transient stability of the stochastic heavy top. Figure  \ref{fig:HT-stab} shows the  mean of the sample paths of $\Gamma_3$ and their probability density function starting from the same initial condition, which is close to the equilibrium solution  $\boldsymbol \Gamma = (0,0,1)$. As expected, $\Gamma_3$ exhibits a transition from a stable position to an ergodic position. 

We will not further investigate the stability of the stochastic heavy top here, but only remark that $\Sigma^2$ does not depend for example on the gravity, but only on the noise amplitude. 
We expect this to hold for the more general heavy top with three different moments of inertia, as the restriction on the noise amplitude will remain the same, and only the deterministic stability criterion will be different, see \cite{lewis1992heavy}.

\subsection{The stochastic compressible Euler equation}\label{Euler-compressible}

In the previous section, we studied the classical finite dimensional examples and we saw that the constant $\Sigma$ is exactly the noise amplitude $\sigma$. 
We will now study the classical infinite dimensional examples which are the compressible and incompressible two-dimensional Euler equations. 
We start with the stochastic compressible Euler equation which is more general but directly imply the result for the incompressible system. 
Here we will just apply the previous theory, but we refer to \cite{holm2015variational,gaybalmaz2016geometric} for more details about these stochastic fluid equations and how to derive them.  For establishing formal stability of deterministic compressible Euler equation, we will only consider barotropic flow, i.e. the fluid's density depends on the pressure only. For the result to hold, we need to make the following assumptions: shocks and cavities do not develop and the flow remains in the subsonic regime.  

The compressible fluid is a semi-direct product system with the density $\rho$ is
an advected variable, and $u\in \mathfrak X(\mathbb R^2)$ is a velocity field.  
The equation of motion is a coadjoint motion on the semi-direct product between the diffeomorphism group and the space of densities and the corresponding adjoint and coadjoint actions are 
\begin{align}
    \mathrm{ad}_{(u,\rho)} (v,\rho')&=( (u\cdot \nabla) v-(v\cdot \nabla)u , u\cdot \nabla \rho'- v\cdot \nabla \rho)\nonumber \\
    &= \sum_{j}(u_j\partial_j v_i- v_j \partial_j u_i, u_i \partial_i\rho' - v_i \partial_i \rho)\\ 
    \mathrm{ad}^*_{(u,\rho)} (m,\eta)&= - ( (u\cdot \nabla )m + m\cdot (\nabla u)^T + m(\nabla \cdot u)+ \eta \nabla \rho , \nabla\cdot ( \eta u))\nonumber \\
    &= - \sum_{j}( m_j \partial_i u_j + \partial_j m_i u_j - m_i \partial_j u_j + \eta \partial_i \rho, \partial_j(\eta u))\, .
\end{align}
Meanwhile, the deterministic Hamiltonian is  
\begin{align}
    h(u, \rho) = \int \left ( \frac12 \rho u^2 + \varepsilon(\rho) \right )dx dy\, ,
\end{align}
where $\varepsilon(\rho)$ a function which describes the internal energy of the fluid. 
The stochastic process for this Lagrangian can be  directly computed from the coadjoint action and is 
\begin{align}
    \begin{split}
    du &=  u\cdot \nabla u\, dt + \sum_k((u\cdot \nabla)\sigma_k + \sigma_k (\nabla u)^T)\circ dW_t^k 
    - \frac{1}{\rho} \nabla p\,dt\\
    d\rho &= \mathrm{div}(\rho u) dt + \sum_k\mathrm{div}(\rho \sigma_k) \circ dW_t^k\, ,
    \end{split}
\end{align}
where $p:=\rho^2 \varepsilon'(\rho)$ and the divergence free $\sigma_i(u)$ only depend on $u$ .

In the deterministic case, one of the stable flows for the Euler's equation is  the shear flow solution, which has the form $u_e= (u(y),0)$ and a constant density $\rho_e= 1$, can be stable provided $u(y)$ does not have inflexion points for example. 
We will consider such equilibrium solution from now on, and the compatible noise can be seen to be of the form $\sigma_k = (\eta_k(y), 0)^T$ for a certain number of functions $\eta_k(y)$. 
This renders the adjoint and coadjoint actions of the stochastic fields
\begin{align}
     \mathrm{ad}_{(\sigma_k, 0)} (v,\rho)&= (\eta_k \partial_1 v_i e_i- v_2e_1 \eta', \eta_k \partial_1\rho ) \\
    \mathrm{ad}^*_{(\sigma_k,0)} (\delta u ,\delta \rho)&= - ( e_2\delta u_1  \eta_k' + e_i\eta_k \partial_1 \delta u_i  , \eta_k\partial_1 ( \delta \rho))\, ,
\end{align}
where $\eta_k'= \partial_2\eta_k(y)$ and $(e_1,e_2)$ the orthogonal basis of $\mathbb R^2$. To compute the transient stability,  recall that we need to evaluate the following quantity 
\begin{align}  
     A_h + B_h+ A_C+B_C  = \sum_i \langle D^2H_C\mathrm{ad}^*_{\sigma_i} \delta \mu_t  +  \mathrm{ad}_{\sigma_i} D^2H_C \delta\mu_t, \mathrm{ad}^*_{\sigma_i} \delta \mu_t\rangle\, ,
\end{align}
where $A$ and $B$ are the two terms of the right hand side, and the subscripts indicated whether the Hamiltonian or the Casimir is taken for the second variation. 
We begin with the Hamiltonian terms, i.e. $A_h$ and $B_h$, but first we compute the second variation of the Hamiltonian 
\begin{align*}
    \begin{split}
    \left \langle (\delta u_1, \delta \rho_1), D^2h\, (\delta u_2 , \delta \rho_2)\right \rangle &= \int  \delta \rho_1 ( \delta u_2 \cdot u_e ) + \delta \rho_2 (\delta u_1 \cdot u_e ) \\
    & + \epsilon''(1) \delta \rho_1 \delta \rho_2 + \delta u_1 \cdot \delta u_2 \, dx dy\, ,
    \end{split}
\end{align*}
where recall that $D^2h= D^2h(\mu_e,\rho_e)$ and $\rho_e=1$. In shear flow case, it becomes
\begin{align*}
    \begin{split}
        \langle (\delta u_1, \delta \rho_1), D^2h\, (\delta u_2 , \delta \rho_2)\rangle &= \int   \left (  \delta \rho_1 \delta u_{2,1} u+ \delta \rho_2 \delta u_{1,1} u \right .  \\
        &\left . + \epsilon''(1) \delta \rho_1 \delta \rho_2 + \delta u_1 \cdot \delta u_2 \,\right )  dx dy\, ,
    \end{split}
\end{align*}
where $\delta u_{1,1}$ is the first component of $\delta u_1$. 
We compute the first term and obtain
\begin{align}
    \begin{split}
        A_h  &=\langle \mathrm{ad}^*_{(\sigma_k,0)}(\delta u, \delta \rho), D^2h\, \mathrm{ad}^*_{(\sigma_k,0)} (\delta u , \delta \rho)\rangle \\
        &=\int  \Big( u\eta_k^2 \partial_1 \delta \rho \partial_1 \delta u_1   + \epsilon''(1) \eta_k^2  (\partial_1 \delta \rho)^2 \\ 
        &\hspace{10mm}+\delta u_1^2  \eta_k'^2 + \eta_k^2 |\partial_1 \delta u|^2+ 2\eta_k \eta_k' \delta u_1 \partial_1\delta u_2\Big ) \,  dx dy\, , 
    \end{split}
    \label{first}
\end{align}
where a sum over $k$ is understood throughout these computations. 
For the second term, we start with 
\begin{align*}
    D^2h\, (\delta u, \delta \rho) = (\delta \rho ue_1 + e_i \delta u_i, \delta u_1 u + \epsilon''(1) \delta \rho)\, , 
\end{align*}
with that the the adjoint action is 
\begin{align*}
    \mathrm{ad}_{(\sigma,0)} D^2h\, (\delta u, \delta \rho) &=  \left (e_1\eta_k u \partial_1 \delta \rho  +e_i \eta_k  \partial_1 \delta u_i - e_1 \delta u_2 \eta_k' ,  \eta_k u\partial_1\delta u_1  +\eta_k \epsilon''(1) \partial_1\delta \rho\right )\, . 
\end{align*}
We thus obtain
\begin{align}
    \begin{split}
    B_h &= \langle \mathrm{ad}_{(\sigma,0)} D^2h\, (\delta u, \delta \rho), \mathrm{ad}^*_{(\sigma,0)} (\delta u, \delta \rho) \rangle\\
    &= -\int \Big ( \eta_k' \eta_k \delta u_1  \partial_1 \delta u_2 - \eta_k'\eta_k \partial_1 \delta u_1 \delta u_2 \\
      &\hspace{18mm}+\eta_k^2 u\partial_1 \delta u_1 \partial_1 \delta \rho  + \eta_k^2 |\partial_1 \delta u_i|^2 \\
    &\hspace{18mm}+ \eta_k^2 u \partial_1  \delta \rho \partial_1\delta u_1  +\eta_k^2 \epsilon''(1) (\partial_1  \delta \rho)^2\Big )dxdy \, . 
    \end{split}
\end{align}
After integrating the second term by parts, we end up with only 
\begin{align}
    \begin{split}
    A_h+B_h&= \int \delta u_1^2(\eta_k')^2 dx dy \leq \Sigma_1^2 \int ||\delta u||^2\, ,
    \end{split}
\end{align}
where $\Sigma_1 > 0 \in \mathbb{R}$  is the smallest number such that
\begin{align}\label{eta-bound}
    \sum_k (\eta'_k)^2(y) \leq \Sigma_1^2, \quad \forall y\,.
\end{align}
We now need to compute $A_C$ and $B_C$. 
First, recall the Casimir of the compressible 2D Euler equation
\begin{align}
    C(u,\rho) = \int \rho\, \Phi\left (\frac{\omega}{ \rho}\right ) dx dy\, ,
\end{align}
where $\omega = e_3 \cdot \nabla \times u$ is the scalar vorticity and $\Phi$ is an arbitrary function.
Following \cite{holm1985nonlinear} and using the Bernoulli function $K$, the second variation of the Casimir can be written as  
\begin{align}
    \langle (\delta u, \delta \rho), D^2C\, (\delta u, \delta \rho)\rangle  = \int \frac{1}{\omega_e} K'(\omega_e) \left (\delta\left (\frac{\omega }{\rho}\right )\right )^2\, ,
\end{align}
and the shear flow we have $K'(\omega_e)= -u'(y)\frac{u(y)}{u''(y)}$.
We then use the equivalent form of the stochastic Euler equation for the vorticity $\omega$
\begin{align}\label{euler-vorticity}
    d\left ( \frac{\omega}{\rho}\right) = u\cdot \nabla\left (\frac{ \omega}{\rho}\right)\, + \sigma_i \cdot \nabla \left (\frac{\omega}{\rho}\right ) \circ dW_t^i\, . 
\end{align} 
We then directly compute as $A_C+B_C = D_{(\delta u, \delta \rho)}( (\delta u,\delta \rho), D^2C(\delta u,\delta \rho)$ to get 
\begin{align*}
    A_C+ B_C &= \sum_k\int \left ( \left (\partial_1 \delta\left (\frac{\omega }{\rho}\right )\right )^2+  \partial_1^2 \delta\left (\frac{\omega }{\rho}\right ) \delta\left (\frac{\omega }{\rho}\right )\right ) \frac{K'(\omega_e)}{\omega_e} \eta_k^2\, dx dy \\
    &=0 \, , 
\end{align*}
where we used integration by parts the fact that $\omega_e$ and $\eta$ are independent of $x$.  Finally, upon observing that the deterministic norm is positive or negative definite at $(u_e,1)$, we obtain the transient stochastic stability result by applying the theorem \ref{SEC}. 
In this case, $\Sigma_1^2 = \Sigma^2$ is proportional to the spatial derivative of the noise amplitude given in \eqref{eta-bound}. 

Finally, the incompressible case is a direct consequence of the compressible case as one can directly check that the computation follows similarly if $\rho=1$ and $\delta \rho=0$. 
In particular, the same result holds with $\Sigma_1^2 $ in equation \eqref{eta-bound}.

\begin{remark}
	For incompressible 2D Euler flows, the existence of the constant $\Sigma_1$ in equation \eqref{eta-bound} is necessary for the existence and uniqueness of the stochastic equation.  
	As shown in \cite{brzezniak2016existence}, the following condition on the fields $\sigma_k$ should hold 
	\begin{align*}
		\sup \sum_k |\sigma_k(x)|^2 + \mathrm{max}\left (\sum_k |\nabla \sigma_k(x)|^2 \right )< +\infty\, . 
	\end{align*}
	In our case the second term is bounded by $\Sigma_1^2$. 
\end{remark}

\section{Conclusion and open problems}

In this work, we have considered the problem of assessing the stability in probability of a certain class of equilibria of stochastic Lie-Poisson systems. 
These equilibria are critical points of the deterministic extended Hamiltonian and are equilibria of a stochastic Lie-Poisson dynamical system. 
We showed that the energy-Casimir method can be extended in the stochastic context to investigate the stability properties of these equilibria. 
An important condition for the method is that the equilibrium must remain time-independent upon adding the stochastic perturbation. 
This compels us to choose a particular form of noise compatible with the equilibrium solution of the deterministic system. 
This choice is rather restrictive but it seems to be required for such a result to hold.
By extending the energy-Casimir method, we have obtained additional information about the effects of noise on these equilibria.
In particular, the stability is only in a probabilistic sense and holds for a finite time only. 
The period of time where the stability in probability holds can be estimated from above and is directly related to the strength of the noise. 
Remarkably, in the fluid example, this time-scale only depends on the spatial derivative of the noise fields. 
This work suggests that further analysis can be done to obtain a general formula for the estimation of this time scale, or the constant $\Sigma$. 
Here, we have only assumed in our assumption \ref{assumption} that such a constant can be found in general and that we can compute it for the examples treated here. 
The present energy-Casimir approach for obtaining probabilistic stability conditions for stochastic Lie-Poisson Hamiltonian systems could possibly be extended to the energy-momentum method of \cite{simo1991stability}. However, we leave this extension for future work.
Other research directions may include the study of stability when geometric dissipation is used (see \cite{arnaudon2016noise}), or when feedback control is used, as in \cite{ bloch1990stabilization}, to make the rigid body's intermediate axis nonlinearly stable. 
{\small
\subsection*{Acknowledgements}

We thank Wei Pan for very helpful discussions during the course of this work. 
AA acknowledges partial support from an Imperial College London Roth Award. 
All the authors are also partially supported by the European Research Council Advanced Grant 267382 FCCA and the UK EPSRC Grant  EP/N023781/1 held by DH.
}

\bibliographystyle{alpha}
\bibliography{biblio.bib}

\end{document}